\def\titlerunning#1{\gdef\titrun{#1}}
\def\author#1{\gdef\autrun{\def\and{\unskip, }#1}\gdef\@author{#1}}
\def\address#1{{\def\and{\\\hspace*{18pt}}\renewcommand{\thefootnote}{}%
\footnote {#1}}%
\markboth{\autrun}{\titrun}}
\def\email#1{\hspace*{4pt}{\em e-mail}: #1}
\def\MSC#1{{\renewcommand{\thefootnote}{}%
\footnote{\emph{Mathematics Subject Classification (2020):} #1}}}
\def\keywords#1{\par\medskip
\noindent\textbf{Keywords:} #1}
\newtheorem{theorem}{Theorem}[section]
\newtheorem{proposition}[theorem]{Proposition}
\newtheorem{lemma}[theorem]{Lemma}
\newtheorem{definition}[theorem]{Definition}
\theoremstyle{definition}
\newtheorem{construction}[theorem]{Construction}
\newtheorem{remark}[theorem]{Remark}
\numberwithin{equation}{section}
\def\cA{\mathcal A}
\def\cF{\mathcal F}
\def\cG{\mathcal G}
\def\cH{\mathcal H}
\def\cP{\mathcal P}
\def\cQ{\mathcal Q}
\def\cS{\mathcal S}
\def\cU{\mathcal U}
\def\cW{\mathcal W}
\def\PG{{\rm PG}}
\def\AG{{\rm AG}}
\begin{document}


\baselineskip=16pt

\titlerunning{}

\title{Affine vector space partitions and spreads of quadrics}

\author{Somi Gupta
\and
Francesco Pavese}

\date{}

\maketitle

\address{S. Gupta: Department of Mathematics and Applications ``R. Caccioppoli'', University of Naples ``Federico II'', Via Cintia, Monte S. Angelo, I-80126 Naples,
Italy; \email{somi.gupta@unina.it}.
\and
F. Pavese: Department of Mechanics, Mathematics and Management, Polytechnic University of Bari, Via Orabona 4, I-70125 Bari, Italy; \email{francesco.pavese@poliba.it}.
}


\MSC{Primary: 51E23 Secondary: 05B40.}

\begin{abstract}
An {\em affine spread} is a set of subspaces of $\AG(n, q)$ of the same dimension that partitions the points of $\AG(n, q)$. Equivalently, an {\em affine spread} is a set of projective subspaces of $\PG(n, q)$ of the same dimension which partitions the points of $\PG(n, q) \setminus H_{\infty}$; here $H_{\infty}$ denotes the hyperplane at infinity of the projective closure of $\AG(n, q)$. Let $\cQ$ be a non degenerate quadric of $H_\infty$ and let $\Pi$ be a generator of $\cQ$, where $\Pi$ is a $t$-dimensional projective subspace. An affine spread $\cP$ consisting of $(t+1)$-dimensional projective subspaces of $\PG(n, q)$ is called {\em hyperbolic, parabolic} or {\em elliptic} (according as $\cQ$ is hyperbolic, parabolic or elliptic) if the following hold: 
\begin{itemize}
\item each member of $\cP$ meets $H_\infty$ in a distinct generator of $\cQ$ disjoint from $\Pi$; 
\item elements of $\cP$ have at most one point in common;
\item if $S, T \in \cP$, $|S \cap T| = 1$, then $\langle S, T \rangle \cap \cQ$ is a hyperbolic quadric of $\cQ$. 
\end{itemize}
In this note it is shown that a hyperbolic, parabolic or elliptic affine spread of $\PG(n, q)$ is equivalent to a spread of $\cQ^+(n+1, q)$, $\cQ(n+1, q)$ or $\cQ^-(n+1, q)$, respectively.

\keywords{Finite geometry; vector space partition; spread; quadric.}
\end{abstract}

\section{Introduction}
Establishing the existence of a partition of a geometric structure into subspaces of the same dimension is a classical theme in finite geometry. A {\em vector space partition} of the projective space $\PG(n, q)$ is a set of projective subspaces of $\PG(n, q)$ which partitions the points of $\PG(n, q)$. A vector space partition whose subspaces are of the same dimension is called {\em spread}. It was shown by J.~Andr\'e \cite{A} and B.~Segre \cite{S} that $\PG(n, q)$ admits a spread consisting of $t$-dimensional projective subspaces if and only if $t+1$ divides $n+1$. A finite classical polar space arises from a vector space of finite dimension over a finite field equipped with a non-degenerate reflexive sesquilinear or quadratic form. A non-degenerate polar space of $\PG(n, q)$ is a member of one of the following classes: a symplectic space $\cW(n, q)$, $n$ odd, a parabolic quadric $\cQ(n, q)$, $n$ even, a hyperbolic quadric $\cQ^+(n, q)$, $n$ odd, an elliptic quadric $\cQ^-(n, q)$, $n$ odd, or a Hermitian variety $\cH(n, q)$, $q$ a square. A {\em spread} of a polar space $\cA$ is a set of generators (i.e., maximal totally isotropic subspaces or maximal totally singular subspaces) of $\cA$, which partitions the points of $\cA$. A spread of $\cW(n, q)$ is also a spread of $\PG(n, q)$ into $\left(\frac{n-1}{2}\right)$-dimensional projective subspaces. It is easily seen that $\cQ^+(n, q)$ has no spread if $n \equiv 1 \pmod{4}$. Many authors investigated spreads of polar spaces, see \cite{CKW, Dye, K1, K2, K3, M, Shult, Thas}. For $q$ even, $\cQ^+(n, q)$, $n \equiv -1\pmod{4}$, $\cQ(n, q)$, $\cQ^-(n, q)$ always have a spread. For $q$ odd, with $q$ a prime or $q \equiv 0$ or $2 \pmod{3}$, $\cQ^+(7, q)$ and $\cQ(6, q)$ have a spread. The parabolic quadric $\cQ(n, q)$, with $n \equiv 0 \pmod{4}$ and $q$ odd, has no spread. For $q$ odd, $\cQ^+(3, q)$ and $\cQ^-(5, q)$ have a spread. The Hermitian varieties $\cH(n, q)$, $n$ odd and $\cH(4, 4)$ do not have a spread. For open problems related to spreads of polar spaces, the reader is referred to \cite[Section 7.5]{H3}

In this context it is natural to consider an ``affine version'' of a vector space partition as a set of proper affine subspaces that partitions the points of $\AG(n, q)$, see \cite{Bam}. Denote by $H_{\infty}$ the hyperplane at infinity of the projective closure of $\AG(n, q)$. Then an {\em affine vector space partition} of $\PG(n, q)$ is a set of projective subspaces of $\PG(n, q)$ which partitions the points of $\PG(n, q) \setminus H_{\infty}$. An {\em affine spread} is an affine vector space partition whose subspaces are of the same dimension. Let $\cQ$ be a non degenerate quadric of $H_\infty$ and let $\Pi$ be a generator of $\cQ$, where $\Pi$ is a $t$-dimensional projective subspace. Here, we are concerned with affine spreads $\cP$ consisting of $(t+1)$-dimensional projective subspaces of $\PG(n, q)$ such that 
\begin{itemize}
\item each member of $\cP$ meets $H_\infty$ in a distinct generator of $\cQ$ disjoint from $\Pi$; 
\item elements of $\cP$ have at most one point in common;
\item if $S, T \in \cP$, $|S \cap T| = 1$, then $\langle S, T \rangle \cap \cQ$ is a hyperbolic quadric of $\cQ$. 
\end{itemize}
An affine spread $\cP$ of $\PG(n, q)$ satisfying the above properties is called {\em hyperbolic, parabolic} or {\em elliptic}, according as $\cQ$ is hyperbolic, parabolic or elliptic, respectively. In \cite[Section 5.2]{Bam} the authors exibhited a particular  hyperbolic affine spread of $\PG(6, q)$, $q$ even or $q \in \{3, 5\}$, and they conjecture that a hyperbolic affine spread of $\PG(6, q)$ exists for all prime powers \cite[Conjecture 2]{Bam}. In this note it is shown that a hyperbolic, parabolic or elliptic affine spread of $\PG(n, q)$ is equivalent to a spread of $\cQ^+(n+1, q)$, $\cQ(n+1, q)$ or $\cQ^-(n+1, q)$, respectively.

\section{Affine vector space partitions and quadrics}

Throughout the paper we will use the term {\em $s$-space} to denote an $s$-dimensional projective subspace of the ambient projective space. Let $\cQ_{r, e}$ denote a non-degenerate quadric of $\PG(r, q)$ as indicated below:
\begin{align*}
\begin{tabular}{c||c|c|c}
$\cQ_{r, e}$ & $\cQ^+(r, q)$ & $\cQ(r, q)$ & $\cQ^-(r, q)$ \\
\hline
  $e$ & 0 & $1$ & $2$ \\
\end{tabular}, 
\end{align*}
where $r$ is odd if the quadric is hyperbolic or elliptic, whereas $r$ is even if the quadric is parabolic. Associated with $\cQ_{r, e}$, there is a polarity $\perp$ of $\PG(r, q)$, which is non-degenerate except when $e = 1$ and $q$ is even. In particular, the polarity $\perp$ is symplectic if $\cQ_{r, e} \in \{\cQ^+(r, q), \cQ^-(r, q)\}$, $q$ even, and orthogonal if $\cQ_{r, e} \in \{\cQ(r, q), \cQ^+(r, q), \cQ^-(r, q)\}$, $q$ odd. For $\cQ_{r, 1} = \cQ(r, q)$, $q$ even, the polarity $\perp$ is degenerate, indeed $N^{\perp} = \PG(r, q)$, if $N$ is the nucleus of $\cQ(r, q)$, whereas $P^\perp$ is a hyperplane of $\PG(r, q)$ for any other point $P$ of $\PG(r, q)$. A {\em generator} of $\cQ_{r, e}$ is a projective space of maximal dimension contained in $\cQ_{r, e}$ and generators of $\cQ_{r, e}$ are $\left( \frac{r-e-1}{2} \right)$-spaces. A {\em spread} of $\cQ_{r, e}$ is a set of $q^{\frac{r+e-1}{2}}+1$ pairwise disjoint generators of $\cQ_{r, e}$. More background information on the properties of the finite classical polar spaces can be found in \cite{H2, H1, H3}.

\begin{definition}
Let $H$ be a hyperplane of $\PG(r+1, q)$. An {\em affine vector space partition} (or abbreviated as {\em avsp}) of $\PG(r+1,q)$ is a set $\cP$ of subspaces of $\PG(r+1,q)$ whose members are not contained in $H$ and such that every point of $\PG(r+1, q) \setminus H$ is contained in exactly one element of $\cP$.
\end{definition}

The {\em type} of an avsp $\cP$ of $\PG(r+1, q)$ is given by $(r+1)^{m_{r+1}} \dots 2^{m_2} 1^{m_1}$, where $m_i$ denotes the number of $(i-1)$-spaces of $\cP$ for $1\leq i \leq r+1$.

\begin{definition}
    Let $\cP$ be an avsp of $\PG(r+1,q)$. Then $\cP$ is said to be {\em reducible} if there exists a proper subspace $U$ of $\PG(r+1, q)$ such that the members of $\cP$ contained in $U$ form an avsp of $U$. If $\cP$ is not reducible, then it is said to be {\em irreducible}.
\end{definition}

\begin{definition}
    Let $\cP$ be an avsp of $\PG(r+1,q)$. Then $\cP$ is said to be {\em tight} if no point of $\PG(r+1, q)$ belongs to each of the members of $\cP$.
\end{definition}

In \cite[Section 5.2]{Bam} the authors studied a particular avsp $\{S_1, \dots, S_{q^3}\}$ of $\PG(6, q)$, $q$ even, of type $4^{q^3}$, such that the set $\{S_i \cap H \mid i = 1, \dots, q^3\}$ consists of the $q^3$ planes of a Klein quadric $\cQ^+(5, q)$ that are disjoint from a fixed plane of $\cQ^+(5, q)$. Moreover, they conjecture that a similar construction can be realized for all prime powers \cite[Conjecture 2]{Bam}. Motivated by their example, we introduce the definition of {\em hyperbolic, parabolic, or elliptic avsp}.

\begin{definition}\label{def}
Let $\cP = \left\{S_1, S_2, \dots, S_{q^{\frac{r+e+1}{2}}}\right\}$ be an avsp of $\PG(r+1, q)$ of type $\left(\frac{r-e+3}{2}\right)^{q^{\frac{r+e+1}{2}}}$. Then $\cP$ is said {\em hyperbolic, parabolic or elliptic} if the set 
\begin{align*}
\cG = \left\{\Pi_i = S_i \cap H \mid i = 1, \dots, q^{\frac{r+e+1}{2}}\right\}
\end{align*}
consists of $q^{\frac{r+e+1}{2}}$ generators of a quadric $\cQ_{r, e} \subset H$, where $e = 0, 1, 2$, respectively, such that 
\begin{enumerate}
\item elements of $\cG$ are disjoint from a fixed generator $\Pi$ of $\cQ_{r, e}$;
\item distinct members of $\cG$ (or of $\cP$) meet at most in one point;
\item if $|\Pi_i \cap \Pi_j| = 1$, then $\langle S_i, S_j \rangle \cap \cQ_{r, e} = \cQ^+(r - e, q)$.
\end{enumerate}
\end{definition}

\begin{remark}\label{remark1}
Let $r$ be odd, let $\cP = \left\{S_1, S_2, \dots, S_{q^{\frac{r+1}{2}}}\right\}$ be a hyperbolic avsp of $\PG(r+1, q)$ and let $\Pi_i = S_i \cap H$, $i = 1, \dots, q^{\frac{r+1}{2}}$. Since members of $\cP$ are $\left( \frac{r+1}{2} \right)$-spaces, then $\langle S_i, S_j \rangle = \PG(r+1, q)$ and $|S_i \cap S_j| = 1$, if $i \ne j$. Hence $|\Pi_i \cap \Pi_j| = 1$, if $i \ne j$. Therefore for the definition of a hyperbolic avsp {\em 2)} is equivalent to require that distinct members of $\cG$ pairwise intersect in one point, whereas requirement {\em 3)} is redundant.  

If $r \equiv -1 \pmod{4}$, then members of $\cG$ and $\Pi$ belong to the same system of generators and hence $|\Pi_i \cap \Pi_j| = 0$, if $i \ne j$. Therefore hyperbolic avsp of $\PG(r+1, q)$, $r \equiv -1\pmod{4}$, do not exist. 
\end{remark}

\begin{remark}
If $r = e+1$, then there are not $q^{e+1}$ generators of $\cQ_{r, e}$ disjoint from $\Pi$. Hence there is no hyperbolic, parabolic or elliptic avsp for $(r, e) \in \{(1, 0), (2, 1), (3, 2)\}$. If $r = e+3$, then requirement {\em 2)} is redundant.
\end{remark}

\begin{remark}
The existence of a hyperbolic avsp of $\PG(6, q)$ is equivalent to the ``extension problem'' formulated in \cite[Conjecture 2]{Bam}. 
\end{remark}

In the following we will need the next result, that in the hyperbolic case has been proved in \cite[Example 7.6]{Kantor}.

\begin{lemma}\label{aux}
Let $\left\{\Sigma_1, \dots, \Sigma_{q^{\frac{r+e+1}{2}}+1}\right\}$ be a spread of a quadric $\cQ_{r+2, e}$ of $\PG(r+2, q)$. Fix a point $P \in \Sigma_{q^{\frac{r+e+1}{2}}+1}$ and an $r$-space $H \subset P^\perp$ such that $P \notin H$. Set $\cQ_{r, e} = H \cap \cQ_{r+2, e}$ and 
    \begin{align*}
        \Pi_i = \langle P, \Sigma_i \rangle \cap H, \quad i = 1, \dots, q^{\frac{r+e+1}{2}}.
    \end{align*}
    The following hold.
    \begin{itemize}
        \item[i)] If $e \in \{1, 2\}$, i.e., $\cQ_{r+2, e}$ is parabolic or elliptic, then $\Pi_i$ and $\Pi_j$, $i \ne j$, intersect in at most one point.
        \item[ii)] If $e = 0$, i.e., $\cQ_{r+2, 0}$ is hyperbolic and $r \equiv 1 \pmod{4}$, then $\Pi_i$ and $\Pi_j$, $i \ne j$, have exactly one point in common. 
        \item[iii)] Each point of $\cQ_{r, e} \setminus \Sigma_{q^{\frac{r+e+1}{2}}+1}$ lies in precisely $q$ members of $\left\{\Pi_i \mid i = 1, \dots, q^{\frac{r+e+1}{2}}\right\}$. 
    \end{itemize}
\end{lemma}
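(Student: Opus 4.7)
The plan is to realize each $\Pi_i$ as a generator of $\cQ_{r,e}$ via projection from $P$, translate $Q\in\Pi_i$ into the statement that the line $\langle P,Q\rangle$ meets $\Sigma_i$, and then prove $(iii)$ by partitioning a totally singular line via the spread, $(i)$ by a dimension count, and $(ii)$ by double counting.

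\emph{Projective setup.} For $i\le q^{(r+e+1)/2}$ one has $P\notin\Sigma_i$, and also $\Sigma_i\not\subset P^\perp$, for otherwise $\langle P,\Sigma_i\rangle$ would be a totally singular subspace strictly larger than a generator of $\cQ_{r+2,e}$. Hence $\Sigma_i\cap P^\perp$ is a hyperplane of $\Sigma_i$, and since $H\subset P^\perp$,
\[
\Pi_i \;=\; \langle P,\Sigma_i\rangle\cap H \;=\; \langle P,\Sigma_i\cap P^\perp\rangle\cap H,
\]
so $\Pi_i$ is the projection of $\Sigma_i\cap P^\perp$ from $P$ onto $H$. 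The $(r-e+1)/2$-dimensional subspace $\langle P,\Sigma_i\cap P^\perp\rangle$ is totally singular, hence a generator of $\cQ_{r+2,e}$ through $P$; consequently $\Pi_i\subset H\cap\cQ_{r+2,e}=\cQ_{r,e}$ is a generator of $\cQ_{r,e}$. Moreover, for any $Q\in H\setminus\{P\}$ the line $\langle P,Q\rangle$ lies in $P^\perp$, and $Q\in\Pi_i$ holds iff this line meets $\Sigma_i$. Setting $\Pi:=\Sigma_{q^{(r+e+1)/2}+1}\cap H$, every $Q\in\Pi$ satisfies $\langle P,Q\rangle\subset\Sigma_{q^{(r+e+1)/2}+1}$, which meets no other spread member; thus $\Pi\cap\Pi_i=\emptyset$ for all $i\le q^{(r+e+1)/2}$.

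\emph{Proofs of $(iii)$ and $(i)$.} For $Q\in\cQ_{r,e}\setminus\Pi$ the line $\langle P,Q\rangle$ is totally singular, hence entirely contained in $\cQ_{r+2,e}$, and its $q+1$ points are partitioned by the spread. Only $P$ can lie in $\Sigma_{q^{(r+e+1)/2}+1}$ (else $Q$ would too), and no two of the remaining $q$ points can lie in the same $\Sigma_i$ (else the line, and with it $P$, would lie there). Thus these $q$ points lie in $q$ distinct spread members $\Sigma_i$ with $i\ne q^{(r+e+1)/2}+1$, giving exactly $q$ indices with $Q\in\Pi_i$; this proves $(iii)$. For $(i)$, $\Sigma_i\cap\Sigma_j=\emptyset$ gives $\dim\langle\Sigma_i,\Sigma_j\rangle=r-e+2$, whence
\[
\dim\bigl(\langle P,\Sigma_i\rangle\cap\langle P,\Sigma_j\rangle\bigr) \;=\; (r-e+3)-\dim\langle P,\Sigma_i,\Sigma_j\rangle \;\in\;\{0,1\},
\]
so this intersection is $\{P\}$ or a line through $P$; since $P\notin H$, intersecting with $H$ contributes at most one point, so $|\Pi_i\cap\Pi_j|\le 1$.

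\emph{Proof of $(ii)$.} Assume $e=0$ and $r\equiv 1\pmod{4}$, and set $N:=q^{(r+1)/2}$ and $\theta:=(q^{(r+1)/2}-1)/(q-1)$. Each $\Pi_i$ is a generator of $\cQ_{r,0}$, of cardinality $\theta$, and disjoint from $\Pi$. Counting incidences $(Q,\Pi_i)$ with $Q\in\cQ_{r,0}\setminus\Pi$ in two ways — via $|\Pi_i|$ and via $(iii)$ — yields $N\theta = q\,|\cQ_{r,0}\setminus\Pi|$. Double counting the triples $(i,j,Q)$ with $i<j$ and $Q\in\Pi_i\cap\Pi_j\subset\cQ_{r,0}\setminus\Pi$ then gives
\[
\sum_{i<j}|\Pi_i\cap\Pi_j| \;=\; |\cQ_{r,0}\setminus\Pi|\binom{q}{2} \;=\; \frac{N\theta(q-1)}{2} \;=\; \frac{N(N-1)}{2} \;=\; \binom{N}{2},
\]
using $\theta(q-1)=N-1$. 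Combined with $|\Pi_i\cap\Pi_j|\le 1$ from $(i)$, this forces equality for every pair, so $|\Pi_i\cap\Pi_j|=1$. The one genuinely delicate step is the projective setup — cleanly identifying $\Pi_i$ as a generator of $\cQ_{r,e}$ disjoint from $\Pi$ and reducing $Q\in\Pi_i$ to a line–spread incidence; once this is in place, $(iii)$, $(i)$ and $(ii)$ all follow from a line partition, a dimension count, and elementary double counting.
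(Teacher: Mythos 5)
Your proof is correct. For parts (i) and (iii) you follow essentially the same route as the paper: the bound in (i) rests only on the disjointness of $\Sigma_i$ and $\Sigma_j$ (the paper phrases this through the disjointness of $\Sigma_i\cap P^\perp$ and $\Sigma_j\cap P^\perp$, you through a Grassmann dimension count on $\langle P,\Sigma_i\rangle\cap\langle P,\Sigma_j\rangle$ --- same content), and (iii) is in both cases the observation that the totally singular line $\langle P,Q\rangle$ meets exactly $q$ spread members other than $\Sigma_{q^{(r+e+1)/2}+1}$; your write-up is in fact more careful than the paper's one-line version, e.g.\ in verifying that each $\Pi_i$ is a generator of $\cQ_{r,e}$ disjoint from $\Pi$. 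Part (ii) is where you genuinely diverge: the paper argues via the two systems of generators of the hyperbolic quadric --- $T_i=\langle P,\Sigma_i\cap P^\perp\rangle$ is a generator meeting $\Sigma_i$ in codimension one, hence lies in the system opposite to the one containing all spread members (this is where $r\equiv 1\pmod 4$ enters), so $T_i$ meets every $\Sigma_j$, and projecting from $P$ yields a common point of $\Pi_i$ and $\Pi_j$. You instead obtain (ii) purely combinatorially, double counting incidences using (iii), $|\Pi_i|=\theta$, the disjointness $\Pi_i\cap\Pi=\emptyset$, and the upper bound $|\Pi_i\cap\Pi_j|\le 1$ (your dimension count for (i) indeed applies to $e=0$ as well, so there is no circularity), which forces every pairwise intersection to have size exactly one. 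Both arguments are sound: the paper's is local and makes the role of the congruence $r\equiv 1\pmod 4$ geometrically transparent, while yours avoids the parity theory of the two systems of generators at the price of a global count, and shows as a byproduct that the congruence is not needed beyond guaranteeing that a spread of $\cQ^+(r+2,q)$ can exist at all.
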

\begin{proof}
Since $|(\Sigma_i\cap P^\perp)\cap(\Sigma_j\cap P^\perp)| = 0$, it follows that $\Pi_i\cap\Pi_j$ is at most one point. Assume that $e = 0$, i.e., $\cQ_{r+2, 0}$ is hyperbolic and $r \equiv 1 \pmod{4}$. Consider the $\left(\frac{r+1}{2}\right)$-space $T_i=\langle P, \Sigma_i\cap P^\perp\rangle$. Then $T_i$ is a generator of $\cQ_{r+2, 0}$ intersecting $\Sigma_i$ in an $\left(\frac{r-1}{2}\right)$-space. Therefore $T_i$ and $\Sigma_j$, $i \ne j$, are in different systems of generators and hence they have at least one point in common. It follows that $\Pi_i$ and $\Pi_j$, $i \ne j$, have at least one point in common.

Finally, it is easily observed that any line passing through $P$ meets precisely $q$ elements of the spread distinct from $\Sigma_{q^{\frac{r+e+1}{2}}+1}$. Hence, through a point of $\cQ_{r, e} \setminus \Sigma_{q^{\frac{r+e+1}{2}}+1}$ there pass precisely $q$ members of $\cG$. 
\end{proof}

In what follows we observe that a hyperbolic, elliptic, or parabolic avsp of $\PG(r+1, q)$ can be obtained starting from a spread of a hyperbolic, elliptic, or parabolic quadric of $\PG(r+2, q)$. 

\begin{construction}\label{con}
Let $\left\{\Sigma_1, \dots, \Sigma_{q^{\frac{r+e+1}{2}}+1}\right\}$ be a spread of a quadric $\cQ_{r+2, e}$ of $\PG(r+2, q)$. Fix a point $P \in \Sigma_{q^{\frac{r+e+1}{2}}+1}$ and a hyperplane $\mathcal{U} \cong \PG(r+1,q)$ not containing the point $P$. Then $H = \mathcal{U} \cap P^{\perp}$ is an $r$-space of $\PG(r+2, q)$ such that $\cQ_{r, e} = H \cap \cQ_{r+2}$. Let 
   \begin{align*}
        S_i = \langle P, \Sigma_i\rangle\cap\mathcal{U}, \quad i = 1, \dots, q^{\frac{r+e+1}{2}},
    \end{align*} 
and let $\cP = \left\{S_1, \dots, S_{q^{\frac{r+e+1}{2}}}\right\}$. Then $\cP$ consists of $q^{\frac{r+e+1}{2}}$ $\left(\frac{r-e+1}{2}\right)$-spaces of $\cU$, none of them is contained in $H$.
\end{construction}

\begin{proposition}\label{avsp0}
The set $\cP$ is a hyperbolic, parabolic or elliptic avsp of $\PG(r+1, q)$ according as $\cQ_{r+2, e}$ is hyperbolic, parabolic or elliptic, respectively.     
\end{proposition}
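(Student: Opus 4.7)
The strategy is to verify the three requirements of Definition~\ref{def} for $\cP$ by projecting from $P$ onto $\cU$ and pulling the relevant properties back to the spread of $\cQ_{r+2,e}$. Let $\pi:\PG(r+2,q)\setminus\{P\}\to\cU$ denote the projection from $P$, and set $N := q^{(r+e+1)/2}$. Since $P\notin\Sigma_i$ for $i\leq N$, the restriction $\pi|_{\Sigma_i}$ is a bijection onto $S_i$, so $\dim S_i = (r-e+1)/2$; while $\pi$ sends $\Sigma_{N+1}\setminus\{P\}$ onto $\Pi := \Sigma_{N+1}\cap\cU$, the candidate for the fixed generator of $\cQ_{r,e}$ in Definition~\ref{def}.

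The first thing to check is the partition property. For $Y\in\cU\setminus H$, the assumption $Y\notin P^\perp$ forces the line $PY$ to be secant to $\cQ_{r+2,e}$, meeting it at $P$ and exactly one further point $Y'$; this $Y'$ cannot lie on $\Sigma_{N+1}$ (else $PY'\subset\cQ_{r+2,e}$), so it belongs to a unique $\Sigma_i$ with $i\leq N$, giving $Y=\pi(Y')\in S_i$, with $i$ uniquely determined by disjointness of the spread. The same reasoning shows $S_i\cap S_j\subset H$ for $i\neq j$, hence $S_i\cap S_j=\Pi_i\cap\Pi_j$, so requirement~2) of Definition~\ref{def} follows directly from Lemma~\ref{aux}(i)--(ii).

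For requirement~1), the key observation is that $T_i := \langle P,\Sigma_i\cap P^\perp\rangle$ is a generator of $\cQ_{r+2,e}$ through $P$ (a totally singular subspace of dimension $(r-e+1)/2$). A dimension count gives $\langle P,\Sigma_i\rangle\cap P^\perp = T_i$, hence $\Pi_i = T_i\cap\cU$ is a $((r-e-1)/2)$-space lying on $\cQ_{r,e}$, i.e., a generator; distinctness of the $\Pi_i$ is immediate from Lemma~\ref{aux}. The disjointness $\Pi_i\cap\Pi=\emptyset$ reduces, via $P\notin\cU$, to $T_i\cap\Sigma_{N+1}=\{P\}$; a further common point $Q$ would yield a line $PQ\subset\Sigma_{N+1}$ equal to $\langle P,X\rangle$ for some $X\in\Sigma_i\cap P^\perp$, whence $X\in\Sigma_i\cap\Sigma_{N+1}=\emptyset$, a contradiction.

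The hardest step is requirement~3). Suppose $Y\in\Pi_i\cap\Pi_j$ is the unique common point. As in the partition argument there are $X_i\in\Sigma_i$, $X_j\in\Sigma_j$ on the line $PY$, and the three distinct points $P,X_i,X_j$ of $\cQ_{r+2,e}$ on this line make it totally singular; in particular $P\in\langle\Sigma_i,\Sigma_j\rangle$. Since $\Sigma_i,\Sigma_j$ are two disjoint maximal singular subspaces, the restriction $\langle\Sigma_i,\Sigma_j\rangle\cap\cQ_{r+2,e}$ has maximal Witt index and is therefore the hyperbolic quadric $\cQ^+(r-e+2,q)$, uniformly in the three cases $e=0,1,2$. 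Now $\langle S_i,S_j\rangle = \langle\Sigma_i,\Sigma_j\rangle\cap\cU$ by a dimension count (both are $(r-e+1)$-spaces), so $\langle S_i,S_j\rangle\cap H = \langle\Sigma_i,\Sigma_j\rangle\cap\cU\cap P^\perp$ is an $(r-e)$-space contained in the tangent hyperplane to $\cQ^+(r-e+2,q)$ at $P$ but missing $P$. Since this tangent section is the cone with vertex $P$ over $\cQ^+(r-e,q)$, intersecting with a hyperplane not through $P$ returns the base $\cQ^+(r-e,q)$, establishing the identity $\langle S_i,S_j\rangle\cap\cQ_{r,e}=\cQ^+(r-e,q)$.
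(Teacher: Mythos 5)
Your proposal is correct and follows essentially the same route as the paper: the secant-line-through-$P$ argument for the partition property, Lemma~\ref{aux} for the intersection behaviour of the $\Pi_i$, and for requirement~3) the identification of $\langle S_i,S_j,P\rangle$ with $\langle\Sigma_i,\Sigma_j\rangle$, whose quadric section is hyperbolic and whose tangent cone at $P$, sliced by the hyperplane, yields $\cQ^+(r-e,q)$. Your write-up is somewhat more explicit than the paper's (e.g., verifying that each $\Pi_i$ is a generator disjoint from $\Pi$, and that $S_i\cap S_j=\Pi_i\cap\Pi_j$), but the underlying argument is the same.
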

\begin{proof}
In order to prove that $\cP$ is an avsp of $\PG(r+1, q)$ it is enough to show that for every point of $\cU \setminus H$, there exists a member of $\cP$ containing it. Let $R$ be a point of $\mathcal{U}\setminus H$ and let $\ell$ be the line joining $P$ and $R$. Since $\ell$ passes through $P$ and is not contained in $P^{\perp}$, it is secant to $\cQ_{r+2, e}$. Therefore there is a point, say $R'$, distinct from $P$, such that $R' \in \cQ_{r+2, e}$. Moreover $R' \in \Sigma_j$, for some $j \in \left\{1, \dots, q^{\frac{r+e+1}{2}}\right\}$ and hence $R \in S_j$, by construction. 

Set $\cG = \left\{\Pi_i = S_i \cap H \mid i = 1, \dots, q^{\frac{r+e+1}{2}}\right\}$. Then $\cG$ consists of generators of $\cQ_{r, e}$ disjoint from $\Sigma_{q^{\frac{r+e+1}{2}}+1} \cap H \subset \cQ_{r, e}$. By Lemma~\ref{aux}, $|\Pi_i \cap \Pi_j|$, $i \ne j$, equals one, if $\cQ_{r+2, e}$ is hyperbolic, or is at most one, otherwise. 

Let $S_i, S_j \in \cP$, with $|S_i \cap S_j| = 1$. Then $\langle S_i, S_j, P \rangle = \langle \Sigma_i, \Sigma_j \rangle$ is an $(r+2-e)$-space of $\PG(r+2, q)$ containing two disjoint generators of $\cQ_{r+2, e}$. Hence $\langle S_i, S_j, P \rangle \cap \cQ_{r+2, e} = \cQ_{r+2-e, 0} \simeq \cQ^+(r+2-e, q)$. Such a quadric $\cQ_{r+2-e, 0}$ meets $P^\perp$ in a cone having as vertex the point $P$ and as base a $\cQ_{r-e, 0} \simeq \cQ^+(r-e, q)$. It follows that $\langle S_i, S_j \rangle \cap \cQ_{r, e} = \cQ^+(r-e, q)$, as required.  
\end{proof}

We have seen that a spread of a hyperbolic, elliptic, or parabolic quadric of $\PG(r+2, q)$ gives rise to a hyperbolic, elliptic, or parabolic avsp of $\PG(r+1, q)$, respectively. The converse also holds true, as shown below.

\begin{theorem}\label{avsp}
If $\cP$ is a hyperbolic, parabolic or elliptic avsp of $\PG(r+1, q)$, then there is a spread of $\cQ_{r+2, e}$, where $\cQ_{r+2, e}$ is a hyperbolic, parabolic or elliptic quadric, respectively.    
\end{theorem}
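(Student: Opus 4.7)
The plan is to invert Construction~\ref{con}. Embed $\cU=\PG(r+1,q)$ as a hyperplane of $\PG(r+2,q)$ and fix a point $P\notin\cU$. In coordinates with $\cU=\{X_0=0\}$, $P=e_0$, and $H=\{X_0=X_{r+2}=0\}$, let $f(X_1,\dots,X_{r+1})$ be the quadratic form defining $\cQ_{r,e}$ in $H$ with polar form $B$, and set
\[F(X_0,\dots,X_{r+2})=X_0X_{r+2}+f(X_1,\dots,X_{r+1}),\qquad \cQ_{r+2,e}=\{F=0\}.\]
As the orthogonal direct sum of the hyperbolic plane $\langle e_0,e_{r+2}\rangle$ with $f$, the form $F$ is non-degenerate of Witt type $e$, so $\cQ_{r+2,e}$ is a non-degenerate quadric of the desired type; moreover $P\in\cQ_{r+2,e}$, $P^\perp\cap\cU=H$, and $\cQ_{r+2,e}\cap H=\cQ_{r,e}$.

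Set $\Sigma_\infty=\langle P,\Pi\rangle$, a totally singular $\left(\frac{r-e+1}{2}\right)$-space and hence a generator of $\cQ_{r+2,e}$ through $P$. For each $i$, fix $a_i\in\F_q^{r+1}$ such that $(0:a_i:1)\in S_i\setminus H$. Writing a general point of $\langle P,S_i\rangle$ as $(\nu:x+\mu a_i:\mu)$ with $x\in\Pi_i$ and $\mu,\nu\in\F_q$, and using $f(x)=0$ together with $B(x,\cdot)|_{\Pi_i}=0$, one computes
\[F(\nu:x+\mu a_i:\mu)=\mu\bigl(\nu+\mu f(a_i)+B(x,a_i)\bigr).\]
Thus $\cQ_{r+2,e}\cap\langle P,S_i\rangle$ is the union of the generator $\langle P,\Pi_i\rangle$ (equation $\mu=0$) and the hyperplane $\Sigma_i:\nu+\mu f(a_i)+B(x,a_i)=0$, which is a totally singular $\left(\frac{r-e+1}{2}\right)$-space not containing $P$, hence a generator of $\cQ_{r+2,e}$.

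I claim $\cS=\{\Sigma_\infty,\Sigma_1,\dots,\Sigma_{q^{(r+e+1)/2}}\}$ is a spread of $\cQ_{r+2,e}$. For $\Sigma_\infty\cap\Sigma_i$: a common point lies in $P^\perp\cap\langle P,S_i\rangle=\langle P,\Pi_i\rangle$ and in $\langle P,\Pi\rangle$, so it equals $P$ (since $\Pi\cap\Pi_i=\emptyset$), contradicting $P\notin\Sigma_i$. For $\Sigma_i\cap\Sigma_j$ with $i\ne j$, let $\pi$ denote the projection $\PG(r+2,q)\setminus\{P\}\to\cU$ from $P$; any common point maps into $S_i\cap S_j$, which by the partition property lies in $H$, and by the second condition of Definition~\ref{def} this is either empty or a single point $R\in\Pi_i\cap\Pi_j$. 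In the latter case both $\Sigma_i,\Sigma_j$ meet the totally singular line $PR$ in a unique point; writing $R=(0:\mathbf r:0)$, these are $(-B(\mathbf r,a_i):\mathbf r:0)$ and $(-B(\mathbf r,a_j):\mathbf r:0)$, coinciding iff $B(\mathbf r,a_i-a_j)=0$. Here the third condition of Definition~\ref{def} is essential: non-degeneracy of $\cQ^+(r-e,q)=\langle S_i,S_j\rangle\cap\cQ_{r,e}$ means that $B$ restricted to $V:=\langle S_i,S_j\rangle\cap H$ (of dimension $r-e$) is non-degenerate, so $\mathbf r$ is not in its radical and $\mathbf r^\perp\cap V$ is a hyperplane of $V$. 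Since $\Pi_i,\Pi_j\subset\mathbf r^\perp\cap V$ and $\dim\langle\Pi_i,\Pi_j\rangle=r-e-1=\dim(\mathbf r^\perp\cap V)$, one gets $\langle\Pi_i,\Pi_j\rangle=\mathbf r^\perp\cap V$. The point at infinity $W:=(0:a_i-a_j:0)$ of the line joining $(0:a_i:1)\in S_i$ and $(0:a_j:1)\in S_j$ lies in $V$ but not in $\langle\Pi_i,\Pi_j\rangle$ (by a direct parameterization, $V=\langle\Pi_i,\Pi_j,W\rangle$, which would collapse to dimension $r-e-1$ if $W\in\langle\Pi_i,\Pi_j\rangle$). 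Hence $W\notin\mathbf r^\perp$, and $B(\mathbf r,a_i-a_j)\ne 0$.

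Finally, the standard count $(q^{(r+e+1)/2}+1)\theta_{(r-e+1)/2}=|\cQ_{r+2,e}|$ (verified in each of the three types) shows that the pairwise disjoint generators of $\cS$ exhaust $\cQ_{r+2,e}$, so $\cS$ is a spread. The main obstacle is the disjointness $\Sigma_i\cap\Sigma_j=\emptyset$ in the case $\Pi_i\cap\Pi_j\ne\emptyset$: the third condition of Definition~\ref{def} is precisely what rules out the collapse of the two lifts of $R$ along the line $PR$.
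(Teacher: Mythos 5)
Your proof is correct, and it takes a genuinely different route from the paper's. Both arguments invert Construction~\ref{con}: lift each $S_i$ to $\cF_i=\langle P,S_i\rangle$, extract from $\cF_i$ a second generator $\Sigma_i$ besides $\langle P,\Pi_i\rangle$, and add $\Sigma_\infty=\langle P,\Pi\rangle$. The differences lie in how the quadric is produced and, above all, in how disjointness is proved. You build $\cQ_{r+2,e}$ explicitly as $X_0X_{r+2}+f$ with $P=e_0$, which has two side benefits: it settles at once that a suitable singular point $P$ on $H^\perp$ exists (the paper simply takes ``one of the two points of $\cQ_{r+2,e}$ on $H^\perp$''), and the factorization $F=\mu\bigl(\nu+\mu f(a_i)+B(x,a_i)\bigr)$ exhibits $\Sigma_i$ by an explicit linear equation, where the paper instead argues by a polarity/containment contradiction that a second generator must exist. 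For disjointness of $\Sigma_i$ and $\Sigma_j$, the paper shows that $\langle\cF_i,\cF_j\rangle\cap\cQ_{r+2,e}$ is a hyperbolic quadric $\cQ^+(r+2-e,q)$ (this is where condition \emph{3)} of Definition~\ref{def} enters) and then runs a parity argument on the two systems of generators, treating $e=0$ separately via Remark~\ref{remark1} and the global parity in $\cQ^+(r+2,q)$; you instead force any common point onto the line $PR$ over the unique point $R\in\Pi_i\cap\Pi_j$ and show the two lifts of $R$ differ because $B(\mathbf r,a_i-a_j)\ne 0$, which you extract from condition \emph{3)} through the non-degeneracy of $B$ on $V=\langle S_i,S_j\rangle\cap H$ together with the dimension count $\langle\Pi_i,\Pi_j\rangle=\mathbf r^\perp\cap V$ and $V=\langle\Pi_i,\Pi_j,W\rangle$. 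This handles the hyperbolic, parabolic and elliptic cases uniformly and in all characteristics (for $q$ even the restricted polar form is non-degenerate since $V$ has even vector dimension), at the cost of a coordinate computation, whereas the paper's synthetic argument avoids coordinates but needs the generator-system parity machinery and a case split. Your final counting remark is not even needed: with the paper's definition, a spread of $\cQ_{r+2,e}$ is precisely a set of $q^{\frac{r+e+1}{2}}+1$ pairwise disjoint generators, which is exactly what you have produced.
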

\begin{proof}
Let $\cP = \left\{S_1, \dots, S_{q^{\frac{r+e+1}{2}}}\right\}$ be a hyperbolic, parabolic or elliptic avsp of $\PG(r+1, q)$. Then there exists a hyperplane $H$ of $\PG(r+1, q)$, a quadric $\cQ_{r, e}$ of $H$, that is hyperbolic, parabolic or elliptic, respectively, and a fixed generator $\Pi$ of $\cQ_{r, e}$, such that $\Pi_i = S_i \cap H$ is a generator of $\cQ_{r, e}$ disjoint from $\Pi$. Embed $\PG(r+1, q)$ as a hyperplane section, say $\cU$, of $\PG(r+2, q)$ and fix a quadric $\cQ_{r+2, e}$ of $\PG(r+2, q)$ in such a way that $H \cap \cQ_{r+2, e} = \cQ_{r, e}$. Let $P$ be one of the two points of $\cQ_{r+2, e}$ on $H^\perp$, where $\perp$ is the polarity of $\PG(r+2, q)$ associated with $\cQ_{r+2, e}$. For $i = 1, \dots, q^{\frac{r+e+1}{2}}$, consider the following $\left(\frac{r-e+3}{2}\right)$-space
\begin{align*}
\cF_i = \langle P, S_i \rangle.
\end{align*}
Hence $\cF_i$ meets $P^\perp$ in the $\left(\frac{r-e+1}{2}\right)$-space spanned by $P$ and $\Pi_i$, that is a generator of $\cQ_{r+2, e}$. We claim that $\cF_i$ contains a further generator of $\cQ_{r+2, e}$ besides $\langle P, \Pi_i \rangle$. Indeed, if $\langle P, \Pi_i \rangle$ were the unique generator of $\cQ_{r+2, e}$ contained in $\cF_i$, then $\cF_i \subset \langle P, \Pi_i \rangle^\perp = P^\perp \cap \Pi_i^\perp \subset P^\perp$, contradicting the fact that $\cF_i \cap P^\perp$ is an $\left(\frac{r-e+1}{2}\right)$-space. Therefore $\cF_i$ has to contain a further generator of $\cQ_{r+2, e}$, say $\Sigma_i$. Denote by $\Sigma_{q^{\frac{r+e+1}{2}}+1}$ the generator of $\cQ_{r+2, e}$ spanned by $P$ and $\Pi$. We claim that 
\begin{align*}
\left\{\Sigma_1, \dots, \Sigma_{q^{\frac{r+e+1}{2}}+1}\right\}
\end{align*}
is a spread of $\cQ_{r+2, q}$. Assume by contradiction that there is a point $Q' \in \Sigma_i \cap \Sigma_{q^{\frac{r+e+1}{2}}+1}$, for some $i \in \left\{1, \dots, q^{\frac{r+e+1}{2}}\right\}$. Let $Q = \langle P, Q' \rangle \cap H$, then $Q \in \Pi_i \cap \Pi$, a contradiction. Assume by contradiction that $|\Sigma_i \cap \Sigma_j| > 0$, for some $1 \le i < j \le q^{\frac{r+e+1}{2}}$. Then necessarily $\Sigma_i \cap \Sigma_j$ is a point, otherwise $|S_i \cap S_j| > 1$. Moreover, we infer that such a point, say $R' = \Sigma_i \cap \Sigma_j$, must be in $P^\perp$. Indeed, if $R' \notin P^\perp$, then the point $R = \langle P, R' \rangle \cap \cU$ belongs to both $S_i \setminus H$ and $S_j \setminus H$, contradicting the fact that $S_1, \dots, S_{q^{\frac{r+e+1}{2}}}$ is an avsp. In particular, $|S_i \cap S_j| = 1$.

If $e = 0$, by Remark~\ref{remark1}, we may assume that $r \equiv 1 \pmod{4}$. Hence $r+2 \equiv -1 \pmod{4}$ and $\Sigma_k$ and $\Sigma_{q^{\frac{r+e+1}{2}}+1}$ belong to the same system of generators of $\cQ_{r+2, 0}$, for $k = 1, \dots, q^{\frac{r+e+1}{2}}$. It follows that if $|\Sigma_i \cap \Sigma_j| > 0$, then they have at least a line in common, a contradiction.

If $e \in \{1, 2\}$, observe that $\langle \cF_i, \cF_j \rangle$ is a $\PG(r+2-e, q)$ containing the cone having as vertex the point $P$ and as base the hyperbolic quadric $\cQ^+(r-e, q) = \langle S_i, S_j \rangle \cap \cQ_{r, e}$. Furthermore, two more generators of $\cQ_{r+2, e}$, namely $\Sigma_i, \Sigma_j$, are contained in $\langle \cF_i, \cF_j \rangle$ and do not pass through $P$. Therefore necessarily we have that $\langle \cF_i, \cF_j \rangle \cap \cQ_{r+2, e} = \cQ_{r+2-e, 0}$. Let us denote by $\cQ^+(r+2-e, q)$ the hyperbolic quadric $\langle \cF_i, \cF_j \rangle \cap \cQ_{r+2, e}$, so that generators of $\cQ^+(r+2-e, q)$ are $\left(\frac{r-e+1}{2}\right)$-spaces. Since $\Sigma_i \cap \langle P, \Pi_i \rangle$ and $\Sigma_j \cap \langle P, \Pi_j \rangle$ are $\left(\frac{r-e-1}{2}\right)$-spaces, we have that $\Sigma_i$ and $\langle P, \Pi_i \rangle$ lie in distinct systems of generators of $\cQ^+(r+2-e)$. Similarly for $\Sigma_j$ and $\langle P, \Pi_j \rangle$. Two possibilities arise: either $r+2-e \equiv -1 \pmod{4}$ or $r+2-e \equiv 1 \pmod{4}$. Since $\langle P, \Pi_i \rangle \cap \langle P, \Pi_j \rangle$ is a line, if the former case occurs, then $\langle P, \Pi_i \rangle$, $\langle P, \Pi_j \rangle$ belong to the same system of generators of $\cQ^+(r+2-e, q)$. Hence $\Sigma_i$, $\Sigma_j$ belong to the same system of generators of $\cQ^+(r+2-e, q)$ and if $|\Sigma_i \cap \Sigma_j| > 0$, then they have at least a line in common, a contradiction. In the latter case, $\langle P, \Pi_i \rangle$, $\langle P, \Pi_j \rangle$ are in different systems of generators of $\cQ^+(r+2-e, q)$. Therefore $\Sigma_i$, $\langle P, \Pi_j \rangle$ belong to the same system of generators of $\cQ^+(r+2-e, q)$. Similarly for $\Sigma_j$, $\langle P, \Pi_i \rangle$. It follows that $\Sigma_i$, $\Sigma_j$ belong to different systems of generators of $\cQ^+(r+2-e, q)$ and again, if $|\Sigma_i \cap \Sigma_j| > 0$, then they have at least a line in common, a contradiction.
\end{proof}

As pointed out in \cite[Theorem 42]{Bam}, tightness and irreducibility follow immediately for a hyperbolic avsp of $\PG(5, q)$. We will show that the same occurs in higher dimensions and for parabolic or elliptic avsp.

\begin{lemma}\label{aux1}
Let $e \in \{1, 2\}$, let $\cS = \left\{ \Sigma_1, \dots, \Sigma_{q^{\frac{r+e+1}{2}}+1} \right\}$ be a spread of $\cQ_{r+2, e}$. Then at most $q+1$ members of $\cS$ are contained in a quadric $\cQ_{r+1,e-1} \subset \cQ_{r+2, e}$.
\end{lemma}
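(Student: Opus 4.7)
The plan is a single point-counting argument inside $\cQ_{r+1,e-1}$, with the only geometric input being the recognition of $\cQ_{r+1,e-1}$ as a hyperplane section of $\cQ_{r+2,e}$. Because a non-degenerate quadric spans its ambient projective space, there is a unique hyperplane $\pi$ of $\PG(r+2,q)$ with $\cQ_{r+1,e-1}\subseteq\pi$, and comparing cardinalities with the possible hyperplane sections of $\cQ_{r+2,e}$ forces $\pi\cap\cQ_{r+2,e}=\cQ_{r+1,e-1}$. In particular, a spread member $\Sigma_i$ lies in $\cQ_{r+1,e-1}$ if and only if it is contained in $\pi$.

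For every $\Sigma_i\in\cS$ not contained in $\pi$, the Grassmann formula in $\PG(r+2,q)$ shows that $\Sigma_i\cap\pi$ is a hyperplane of $\Sigma_i$; and since $\Sigma_i\subseteq\cQ_{r+2,e}$, this intersection automatically lies inside $\pi\cap\cQ_{r+2,e}=\cQ_{r+1,e-1}$. Let $s=q^{(r+e+1)/2}+1$ denote $|\cS|$, let $d=(r+3-e)/2$ so that each $\Sigma_i$ is a $(d-1)$-space of size $(q^d-1)/(q-1)$, and let $k$ denote the number of spread members contained in $\pi$. Because $\cS$ partitions the points of $\cQ_{r+2,e}$, it restricts to a partition of $\cQ_{r+1,e-1}$ into the $k$ ``inside'' generators and the $s-k$ hyperplane-slices $\Sigma_i\cap\pi$ from the ``outside'' generators, and this yields the identity
\begin{align*}
    |\cQ_{r+1,e-1}| \;=\; k\cdot\frac{q^{d}-1}{q-1}\;+\;(s-k)\cdot\frac{q^{d-1}-1}{q-1}.
\end{align*}

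What remains is a routine computation. I would substitute the standard cardinality $|\cQ^+(r+1,q)|=(q^{(r+2)/2}-1)(q^{r/2}+1)/(q-1)$ when $e=1$ and $|\cQ(r+1,q)|=(q^{r+1}-1)/(q-1)$ when $e=2$ into the left-hand side above, and solve the resulting linear equation for $k$. The algebra should telescope cleanly: the parabolic case ($e=1$) will give $k=2$, while the elliptic case ($e=2$) will give $k=q+1$. In either regime $k\le q+1$, as claimed. The only bookkeeping subtlety is the borderline situation where the generators are points (so $d=1$, which happens at $(r,e)=(1,2)$), but the formula $(q^{d-1}-1)/(q-1)=0$ still faithfully records that a point generator off $\pi$ contributes nothing to the count of $\cQ_{r+1,e-1}$, so the same identity handles this case uniformly. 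The ``main obstacle'' is therefore not conceptual but computational: verifying that the two evaluations of the identity really do collapse to $k=2$ and $k=q+1$, respectively, with no residual terms.
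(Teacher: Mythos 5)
Your proposal is correct and follows essentially the same route as the paper: the paper's proof simply asserts the exact counts (exactly $2$ spread members in a hyperbolic hyperplane section when $e=1$, and exactly $q+1$ in a parabolic hyperplane section when $e=2$), and your point-counting identity is precisely the standard argument that yields these numbers (indeed the algebra does give $k=2$ and $k=q+1$). Your additional step identifying $\cQ_{r+1,e-1}$ with the full hyperplane section $\pi\cap\cQ_{r+2,e}$ is a sound and welcome piece of bookkeeping that the paper leaves implicit.
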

\begin{proof}
If $e = 1$, then exactly $2$ members of $\cS$ are contained in a hyperbolic hyperplane section $\cQ_{r+1, 0} \simeq \cQ^+(r+1, q)$ of $\cQ_{r+2, 1} \simeq \cQ(r+2, q)$, whereas, if $e = 2$, then there are $q+1$ elements of $\cS$ contained in a parabolic hyperplane section $\cQ_{r+1, 1} \simeq \cQ(r+1, q)$ of $\cQ_{r+2, 2} \simeq \cQ^-(r+2, q)$. 
\end{proof}

\begin{proposition}\label{tight-irreducibility}
Let $\cP$ be a hyperbolic, parabolic or elliptic avsp of $\PG(r+1, q)$, then $\cP$ is tight and irreducible.
\end{proposition}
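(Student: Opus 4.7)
My strategy is to prove tightness and irreducibility separately, in both cases leveraging the correspondence with a spread of $\cQ_{r+2,e}$ provided by Theorem~\ref{avsp}. For tightness, any point of $\PG(r+1,q) \setminus H$ lies in a unique member of $\cP$, so a point common to all members must lie in $H$ and hence in every $\Pi_i$. Realising $\cP$ via Construction~\ref{con}, Lemma~\ref{aux}(iii) says each point of $\cQ_{r,e}\setminus\Pi$ lies in exactly $q$ of the $\Pi_i$, while points outside $\cQ_{r,e}$ or inside $\Pi$ lie in none. Since the excluded cases $r = e+1$ force $r\geq e+3$, we have $|\cP|=q^{(r+e+1)/2}\geq q^2>q$, ruling out a common point. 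This part should be routine.

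For irreducibility I would argue by contradiction: suppose $U \subsetneq \PG(r+1,q)$ is proper and $\cP' := \{S \in \cP : S \subset U\}$ is a non-trivial avsp of $U$ with at least two members. Counting affine points of $U$ gives $|\cP'| = q^{\dim U - (r-e+1)/2}$, and picking two distinct $S_{i_1}, S_{i_2} \in \cP'$, Definition~\ref{def}(2) ($|S_{i_1}\cap S_{i_2}|\leq 1$) yields
\begin{align*}
\dim U \;\geq\; \dim\langle S_{i_1},S_{i_2}\rangle\;\geq\; 2\cdot\tfrac{r-e+1}{2}\;=\;r-e+1,
\end{align*}
so $|\cP'|\geq q^{(r-e+1)/2}\geq q^2$, using $r\geq e+3$.

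The crux, and main obstacle, is a matching upper bound on $|\cP'|$. Lifting via Construction~\ref{con} and setting $W:=\langle P,U\rangle$, where $P\notin\cU$ is the apex, $W$ is a proper subspace of $\PG(r+2,q)$ (of dimension $\dim U+1\leq r+1$), and the correspondence $S_i\leftrightarrow\Sigma_i$ identifies $|\cP'|$ with the number of spread members of $\cQ_{r+2,e}$ contained in $W$. I would embed $W$ in a hyperplane of $\PG(r+2,q)$ and bound the spread members therein. For $e\in\{1,2\}$ this is handled by Lemma~\ref{aux1} for the hyperbolic or parabolic sections, together with the observation that the remaining hyperplane section types (elliptic sections, or tangent cones) contain even fewer spread members by a direct dimension count, giving $|\cP'|\leq q+1$. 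For $e=0$ a separate direct argument is needed: a non-tangent hyperplane section of $\cQ^+(r+2,q)$ is a $\cQ(r+1,q)$ whose maximal subspaces have dimension $(r-1)/2$, too small to contain a spread generator of dimension $(r+1)/2$; and in a tangent hyperplane $(P')^{\perp}$, any spread generator contained therein must pass through $P'$ (otherwise $\langle P',\Sigma\rangle$ would enlarge $\Sigma$ inside $\cQ^+(r+2,q)$), so there is at most the unique such generator. In all cases $|\cP'|\leq q+1<q^2$, contradicting the lower bound, so $\cP$ must be irreducible.
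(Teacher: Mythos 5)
Your proposal is correct. The tightness part is essentially the paper's own argument: realise $\cP$ via Construction~\ref{con} (through Theorem~\ref{avsp}), apply Lemma~\ref{aux}\,iii) so that any point of $H$ lies on at most $q$ members, and compare with $|\cP| = q^{\frac{r+e+1}{2}} > q$. For irreducibility, however, you take a genuinely different and more uniform route. The paper argues case by case: for $e=0$, and for $e=1$ with two disjoint members, two elements of $\cP$ already span $\PG(r+1,q)$, so nothing further is needed; in the remaining cases ($e=1$ with $|S_i\cap S_j|=1$, or $e=2$) it bounds the members of $\cP$ inside $\langle S_i,S_j\rangle$ by the spread members inside $\langle P,\Sigma_i,\Sigma_j\rangle$, whose quadric section is hyperbolic or parabolic, and invokes Lemma~\ref{aux1} to get at most $q+1$, too few to cover $\langle S_i,S_j\rangle\setminus H$. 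You instead take an arbitrary candidate reducing subspace $U$, count exactly $|\cP'| = q^{\dim U-\frac{r-e+1}{2}} \ge q^{2}$, lift to $W=\langle P,U\rangle$, and bound the spread members of $\cQ_{r+2,e}$ in any hyperplane: Lemma~\ref{aux1} for the hyperbolic/parabolic sections, a dimension count for elliptic (and, when $e=0$, parabolic) sections, and the vertex argument for tangent hyperplanes. Your version is a bit longer but treats all $e$ and all subspaces $U$ uniformly, which is a real advantage for $e=2$, where a reducing hyperplane could properly contain the span of two members meeting in a point, a configuration the paper's argument about $\langle S_i,S_j\rangle$ does not address head-on; the paper's version is shorter because it exploits the spanning observation in the easy cases. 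Two small touch-ups to your write-up: for $e\in\{1,2\}$ the tangent-cone sections are \emph{not} excluded by a dimension count (the cone over $\cQ_{r,e}$ does contain totally singular subspaces of generator dimension); you need there exactly the argument you spell out for $e=0$, namely that a generator $\Sigma\subset (P')^{\perp}$ with $P'\notin\Sigma$ would extend to the totally singular $\langle P',\Sigma\rangle$, so all generators in the cone pass through $P'$ and pairwise disjointness leaves at most one spread member — this transfers verbatim, so only the phrasing needs fixing. Also, the spread members contained in $W$ may include $\langle P,\Pi\rangle$ in addition to the lifts of $\cP'$, so strictly $|\cP'|$ is at most (not equal to) that number; this only strengthens the inequality you use.
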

\begin{proof}
Let $\cP=\left\{S_1, S_2, \dots, S_{q^{\frac{r+e+1}{2}}}\right\}$ be a hyperbolic, parabolic or elliptic avsp of $\PG(r+1, q)$. If $e = 0$, in order to prove that $\cP$ is irreducible, it is enough to observe that, if $i \ne j$, then the span of $S_i$ and $S_j$ is the whole $\PG(r+1, q)$. Similarly, if $e = 1$ and $|S_i \cap S_j| = 0$, then $\langle S_i, S_j \rangle = \PG(r+1, q)$. If either $e = 1$ and $|S_i \cap S_j| = 1$ or $e = 2$, we claim that the elements of $\cP$ contained in $\langle S_i, S_j \rangle$, where $i \ne j$, do not cover all the points of $\langle S_i, S_j \rangle \setminus H$. By Theorem~\ref{avsp}, there is a quadric $\cQ_{r+2, e}$ with a spread $\cS = \left\{ \Sigma_1, \dots, \Sigma_{q^{\frac{r+e+1}{2}}+1} \right\}$ such that $\cP$ can be obtained via Construction~\ref{con}. Then the number of elements of $\cP$ contained in $\langle S_i, S_j \rangle$ equals the number of elements of $\cS$ contained in $\langle P, \Sigma_i, \Sigma_j \rangle$. Since $\langle P, \Sigma_i, \Sigma_j \rangle \cap \cQ_{r+2, e}$ is either a hyperbolic quadric $\cQ^+(r+2-e, q)$, if $|S_i \cap S_j| = 1$, or a parabolic quadric $\cQ(r+1, q)$, if $|S_i \cap S_j| = 0$ and $e = 2$, such a number cannot exceed $q+1$ by Lemma~\ref{aux1}. It follows that $\cP$ is irreducible.

By Lemma~\ref{aux}~{\em iii)}, through a point of $H$ there pass precisely $q$ members $\cP$. Hence tightness follows. 
\end{proof}

\section{Conclusion}

We have seen that a hyperbolic, parabolic or elliptic avsp of $\PG(n, q)$ is equivalent to a spread of $\cQ^+(n+1, q)$, $\cQ(n+1, q)$ or $\cQ^-(n+1, q)$, respectively. Furthermore, such an avsp is tight and irreducible. Based on Proposition~\ref{avsp0} and Theorem~\ref{avsp}, the extension problem formulated in \cite[Conjecture 2]{Bam} is equivalent to that of the existence of a spread of the triality quadric $\cQ^+(7, q)$, which in turn is equivalent to that of the existence of an ovoid of $\cQ^+(7, q)$. We remark that the existence of a spread of $\cQ^+(7, q)$ has been established in the cases when $q$ is even or when $q$ is odd, with $q$ a prime or $q \equiv 0$ or $2 \pmod {3}$.

\bigskip

\noindent\textit{Conflict of interest statement.}
The author declare that there is no conflict of interest.

\bigskip

\noindent\textit{Data availability statement.}
Data sharing not applicable to this article as no datasets were generated or analysed during the current study.

\bigskip

\smallskip
{\footnotesize
\noindent\textit{Acknowledgments.}
This work was supported by the Italian National Group for Algebraic and Geometric Structures and their Applications (GNSAGA-- INdAM).
}

\end{document}